\shorttitle{Random initial conditions in fluid limits} 
\def\uK{^{(K)}}
\def\l{\lambda}
\def\m{\mu}
\def\a{\alpha}
\def\h{\eta}
\def\Eq{\ =\ }
\def\Def{\ :=\ }
\def\eq{\begin{equation}}
\def\en{\end{equation}}
\def\eqa{\begin{eqnarray}}
\def\ena{\end{eqnarray}}
\def\eqs{\begin{eqnarray*}}
\def\ens{\end{eqnarray*}}
\def\oZ{{\overbar{Z}}}
\def\oU{{\overbar{U}}}
\def\oV{{\overbar{V}}}
\def\tg{{\tilde g}}
\def\Le{\ \le\ }
\def\hM{{\widehat M}}
\def\d{\delta}
\def\t{\tau}
\def\e{\varepsilon}
\def\a{\alpha}
\def\b{\beta}
\def\g{\gamma}
\def\half{\tfrac12}
\def\non{\nonumber}
\def\th{\theta}
\def\Ref#1{(\ref{#1})}
\def\uab{^{(\g,\b)}}
\def\uabM{^{(\g,\b;M)}}
\def\Bl{\left(}
\def\Br{\right)}
\def\Blb{\left\{}
\def\Brb{\right\}}
\def\tphi{{\tilde\phi}}
\def\hZ{{\widehat Z}}
\renewcommand{\P}{\mathbb{P}}
\renewcommand{\E}{\mathbb{E}}
\newcommand{\one}[1]{\mathbf{1}_{\{#1\}}}
\newcommand{\Real}{\mathbb R}
\newcommand{\overbar}[1]{\mkern 1.5mu\overline{\mkern-1.5mu#1\mkern-1.5mu}\mkern 1.5mu}
\numberwithin{equation}{section}  
\begin{document}

\title{On the emergence of random initial conditions in fluid limits} 

\authorone[Universit\"at Z\"urich]{A. D. Barbour} 

\addressone{Institut f\"ur Mathematik, Universit\"at Z\"urich,
Winterthurertrasse 190, CH-8057 Z\"URICH; Email address: a.d.barbour@math.uzh.ch} 

\authortwo[The Hebrew University of Jerusalem]{P. Chigansky} 

\addresstwo{Department of Statistics,
The Hebrew University,
Mount Scopus, Jerusalem 91905,
Israel; Email address: pchiga@mscc.huji.ac.il} 

\authorthree[Monash University]{F. C. Klebaner} 

\addressthree{School of Mathematical Sciences,
Monash University, Monash, VIC 3800, Australia; Email address: fima.klebaner@monash.edu} 

\begin{abstract}
The paper presents a phenomenon occurring in population
processes that start near zero and have large carrying capacity.
By the classical result of Kurtz~(1970), such processes, normalized by the carrying capacity, converge on finite
intervals to the solutions of ordinary differential equations, also known as the fluid limit.
When the initial population is small relative to carrying capacity, this limit is trivial.
Here we show that, viewed at suitably chosen times increasing to infinity, the process converges to the fluid limit,
governed by the same dynamics, but with a random initial condition.
This random initial condition is related to the martingale limit of an associated linear
birth and death process.
\end{abstract}

\keywords{birth-death process, population dynamics with carrying capacity, fluid approximation} 


\section{Introduction}

Many models of population growth can be formulated, following the ideas of McKendrick~\cite{McK26} and
Bartlett~\cite{Bartlett49}, \cite{Bartlett60}, as Markovian birth and death (BD) processes.  The classical
Malthusian model can be viewed as a BD process with
population birth rate $\lambda z$ and death rate $\mu z$ depending linearly on the population size~$z$,
corresponding to
constant {\it per capita\/} birth rate~$\l$ and death rate~$\m$.  This process
cannot stabilize near any finite population size, and so non-linear density dependent BD processes~$(Z_t,\,t\ge 0)$,
with {\it per capita\/} birth
rates $\lambda  - (\l-\m)g_1(z/K)$ and death rates $\mu   + (\lambda-\mu)g_2(z/K)$, $z\in \mathbb{Z}_+$,
have been introduced to remedy the defect.  In such a formulation, $\lambda>\mu\ge 0$ are fixed constants,
$g = g_1 + g_2$ is typically an increasing
function with $g(0)=0$ and $g(x_\infty)=1$ for some $x_\infty\in (0,\infty)$,
and $K$ is a parameter, thought of as being large, that is representative of the {\em carrying capacity}.

The analogue of Verhulst's~(1838) model has $g_1(x) = 0$ and $g_2(x) = x$ for all $x \ge 0$, and is
known as the stochastic logistic process;
it serves as our prototype. Ricker's~\cite{Ri54} model has $g_1(x) = \frac{\l}{\l-\m}(1 - e^{-\a x})$ and $g_2(x) = 0$;
that of Beverton \& Holt~\cite{BH57} has $g_1(x) = \frac{\l}{\l-\m} x/(x+m)$ and $g_2(x) = 0$, that of
Hassell~\cite{Hass75} has
$g_1(x) = \frac{\l}{\l-\m}\big\{1 - (1+x/m)^{-c}\big\}$ and $g_2(x) = 0$, and that of Maynard--Smith \&
Slatkin~\cite{MSS73} has
$g_1(x) = \frac{\l}{\l-\m}\big\{ 1 - (1 + (x/m)^c)^{-1}\big\}$ and $g_2(x) = 0$.

In these models, when~$K$ is large and the initial population size~$Z_0$ is  relatively small, the birth rate
exceeds the death rate, and the population size begins by
growing exponentially,  avoiding extinction in the early stages with a significant probability. As the size gets
larger, the net birth rate decreases and
population growth slows down, settling around the carrying capacity $K x_\infty$. The population
typically fluctuates around the carrying capacity for a very long period of time, until, by chance,
it eventually dies out.

This qualitative behavior can be made precise by considering the normalized {\em density} process $\overbar{Z}_t= Z_t/K$.
By the result of  Kurtz~\cite{Kurtz}, Theorem 2.11, for any fixed $T>0$,
\begin{equation}\label{lim}
    \sup_{0\le t\le T}\left|\overbar{Z}_t -x_t\right|\xrightarrow[K\to\infty]{d}0,
\end{equation}
where $x=(x_t)_{t\in \Real_+}$ is the solution of the o.d.e., or fluid limit,
\begin{equation}\label{ode}
    \dot x_t \Eq  (\lambda -\mu)x_t\big(1 -g(x_t)\big), \quad t\ge 0,
\end{equation}
subject to the initial condition $x_0:=\lim_{K\to\infty}\overbar{Z}_0$.

When the initial population size~$Z_0$ is proportional to~$K$, the initial condition $x_0$ is positive,
and the density process
$\overbar{Z}$ converges to the corresponding positive solution of~\eqref{ode}. In particular, this implies that
extinction  prior to any {\em fixed} time $T$ has vanishing probability.
As $T$ increases, the solution of \eqref{ode} approaches its stable equilibrium at $x_\infty$.
Since $\overbar{Z}$ is a transient Markov chain, it is absorbed at zero eventually. However, large deviation analysis
(see, for example,  Barbour~\cite{b76} and  Jagers \& Klebaner~\cite{JagKle2}) shows that
$\overbar{Z}$ does not leave a vicinity
of~$x_\infty$ for a long period of time, with mean growing exponentially with~$K$.

If the initial population size $Z_0$ is fixed with respect to $K$, so that $x_0=0$, the limit~\eqref{lim} implies
that~$\overbar{Z}$ converges to the zero function on any bounded interval. This implies that those
trajectories of~$Z$ that stay positive up to time~$T$ remain of smaller order than~$K$ during that time, so that
it takes longer to grow to level comparable with~$K$. Other than that, the convergence~\eqref{lim}
reveals no information about the behaviour of those trajectories that eventually reach the carrying capacity.

In the present paper, we derive a limit theorem showing that, if the initial population is small when
compared to~$K$, so that $x_0=0$, the density process nonetheless converges over increasing time intervals to a
nontrivial solution of the same o.d.e.~\eqref{ode}, but now with a {\em random} initial condition.

The emergence of a random initial condition in the limit can already be seen in the simple model of pure birth processes.
This case admits a one page proof, involving nothing more complicated than weighted sums of
i.i.d.\ exponential random variables (Section \ref{sec-bp}).
A completely different approach is required in the more general setup of Theorem~\ref{thm}.
Here, the proof relies on the approximation of the non-linear BD process by a linear BD process during the initial
stages, and by the non-linear deterministic dynamics thereafter.

As pointed out in  Barbour {\it et al.\/}~\cite{BHKK15}, the idea of such an approximation is not new,
going back to the papers of
Kendall~\cite{Kendall56} and Whittle~\cite{Whittle55} in the mid 1950's. However, its rigorous justification in
many of the models where it has heuristically been invoked
can be quite involved. Non-linear multidimensional Markov population processes were considered recently in \cite{BHKK15},
where it was established that, after an initial build up phase, the random population follows the solution
of the corresponding deterministic equations, but with a {\em random} time shift (\cite{BHKK15}, Theorem~1.1).
The proof in~\cite{BHKK15} relies on an abstract coupling construction (Thorisson~\cite{Thor}, Theorem 7.3).

 Here, we revisit the one-dimensional setting, in which the argument can be made much simpler;
in particular, there is a very neat explicit expression for the random initial condition
to be used with the fluid approximation. In addition, the argument can be carried through
under somewhat weaker assumptions than are used in~\cite{BHKK15}.

\section{The main result}

Defining $g_l^+(x) := \sup_{0\le y\le x}|g_l(y)|$, $l=1,2$,
and recalling that $g = g_1 + g_2$, we work under the following assumptions:
\eqa
     (\mathrm{i})&& g(0) = 0,\ g(x_\infty) = 1 \mbox{ for some}\ x_\infty < \infty,
              \mbox{ and } g(x) < 1 \mbox{ for } 0 < x < x_\infty; \non\\
    (\mathrm{ii})&&  xg(x) \mbox{ is uniformly Lipschitz on } [0,x_\infty], \mbox{ with constant }\th \ge 1;
                       \label{ADB-assns} \\
    (\mathrm{iii})&& g^+(x) := g_1^+(x) + g_2^+(x) \mbox{ is such that } x^{-1}g^+(x) \mbox{ is integrable
                 from } 0. \non
\ena
In view of \Ref{ADB-assns}~(ii), the o.d.e.~\eqref{ode}, with initial condition $x_s = x$, has a unique solution.
It is given implicitly by
\eq\label{ADB-ode-solution}
   G(x_t) - G(x) \Def \int_{x}^{x_t} \frac{du}{u(1-g(u))} \Eq (\l - \m)(t-s),
\en
where the function~$G$ is determined up to an additive constant;
for any~$0 < a < x_\infty$, we can for instance take
\eq \label{ADB-Gdef}
   G(x) \Eq G_a(x) \Def \int_a^x \frac{du}{u(1-g(u))} + \log a \Eq \log x + H_a(x) ,
\en
with
$$
   H_a(x) \Def \int_{a}^{x} \frac{g(u)\,du}{u(1-g(u))}.
$$
With this notation, we can formulate
our main result as follows.

\begin{thm}\label{thm}
For $\l > \m > 0$ and for $0 \le \a < 1$,
let $(Z\uK,\,K \ge 1)$ be a sequence of BD processes with {\it per capita\/} birth rates $\lambda  - (\lambda-\mu) g_1(z/K)$
and death rates  $\mu  + (\lambda-\mu) g_2(z/K)$,  started at the initial population size
$Z_0=\lfloor K^\alpha \rfloor$.
Let $\oZ\uK(t) := K^{-1}Z\uK(t)$ and $t_1(K) := (\l-\m)^{-1}\log{K^{1-\a}}$.
Then,  under Assumptions~\Ref{ADB-assns}, the sequence of processes
$\oZ\uK(t_1(K)+\cdot)$  converges weakly as $K\to\infty$, in the uniform topology
on bounded intervals, to the solution of the
o.d.e.~\eqref{ode} started with the initial condition
\begin{equation}\label{RIC}
  w_0 \Def \begin{cases}
        G_0^{-1}\big(\log W\big), &  \alpha = 0; \\
        G_0^{-1}\big(0\big), & \alpha \in (0,1),
     \end{cases}
\end{equation}
where $W$ is a random variable with
$$
\P[W = 0] = \m/\l\quad \text{and}\quad \P[W > w] = (1 - \m/\l)e^{-(1 - \m/\l)w}, \quad w\ge 0.
$$
\end{thm}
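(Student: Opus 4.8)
\emph{Proposal.} The plan is to carry out the two-regime programme announced in the introduction: while the density is small, $Z\uK$ behaves like a linear birth--death process and supplies the random limit; once the density is macroscopic, the deterministic dynamics of \Ref{ode} take over; and the two descriptions are spliced through the function $G_0$. All of the bookkeeping can be organised around the exact identity
\[
   e^{G_0(\oZ\uK(t))}\Eq K^{-1}e^{(\l-\m)t}\,N\uK(t),\qquad
   N\uK(t)\Def Z\uK(t)\,e^{-(\l-\m)t}\,e^{H_0(\oZ\uK(t))},
\]
which is immediate from $G_0(x)=\log x+H_0(x)$ and $Z\uK=K\oZ\uK$. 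Evaluating it at $t=t_1(K)+s$, where $e^{(\l-\m)t_1(K)}=K^{1-\a}$, gives $e^{G_0(\oZ\uK(t_1(K)+s))}=K^{-\a}e^{(\l-\m)s}\,N\uK(t_1(K)+s)$. Since $G_0^{-1}$ is continuous and strictly increasing, the whole theorem therefore reduces to proving that $K^{-\a}N\uK(t_1(K)+\cdot)$ converges, uniformly on bounded $s$-intervals, to $W$ when $\a=0$ and to the constant $1$ when $\a\in(0,1)$; inverting $G_0$ then produces exactly the solution of \Ref{ode} started from $G_0^{-1}(\log W)$, respectively $G_0^{-1}(0)$, which is \Ref{RIC}.

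The reason for inserting the correction $e^{H_0(\oZ\uK)}$ is that it turns $Z\uK(t)e^{-(\l-\m)t}$ into an approximate martingale for the \emph{nonlinear} process. A direct computation of the generator on the function $z\mapsto z\,e^{H_0(z/K)}$ shows that its leading part is $(\l-\m)\,z\,e^{H_0(z/K)}$ \emph{identically}: the factor $1-g$ created by the density-dependent rates is exactly cancelled by $H_0'(x)=\frac{g(x)}{x(1-g(x))}$, so that, after compensating for the factor $e^{-(\l-\m)t}$, the drift of $N\uK$ is only a higher-order remainder of order $g(\oZ\uK)$. While $\oZ\uK$ is small this remainder is negligible and $N\uK$ coincides, to leading order, with $Z\uK(t)e^{-(\l-\m)t}$, the martingale of the linear birth--death process obtained by freezing $g_1,g_2$ at $0$. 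Decomposing that linear process into the $\lfloor K^\a\rfloor$ independent subfamilies of the initial individuals, its martingale converges in $L^2$ to $W_n:=\sum_{i=1}^{\lfloor K^\a\rfloor}W^{(i)}$, with the $W^{(i)}$ i.i.d.\ copies of $W$ and $\E W=1$; while $\oZ\uK$ is small, $N\uK$ shares this limit. Here the two cases of the theorem separate of their own accord: for $\a=0$ one has $n=1$ and $W_n=W$ stays genuinely random, whereas for $\a\in(0,1)$ the strong law gives $K^{-\a}W_n\to\E W=1$. On the extinction event $\{W=0\}$, of probability $\m/\l$, the initial family dies out and $\oZ\uK\to0$; this is consistent with \Ref{RIC}, since $G_0(x)\to-\infty$ as $x\downarrow0$ forces $w_0=0$ there.

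To pass from the small-density regime to $t_1(K)+s$ itself, I would control the total drift of $N\uK$ over the whole interval $[0,t_1(K)+s]$ and invoke Kurtz's theorem \Ref{lim} for the macroscopic part. Up to bounded factors the drift is $\int_0^{t_1(K)+s}e^{-(\l-\m)u}\,O\big(g(\oZ\uK(u))\big)\,du$. Splitting at the time when $\oZ\uK$ first exceeds a small fixed level $\d$, the contribution from the macroscopic part $\{\oZ\uK>\d\}$ is negligible because there $e^{-(\l-\m)u}\le e^{-(\l-\m)t_1(K)}=K^{-(1-\a)}\to0$, while over $\{\oZ\uK\le\d\}$ the approximately exponential growth $du\approx \frac{d\oZ\uK}{(\l-\m)\,\oZ\uK}$ converts the integral into $O\big(\int_0^{\d}u^{-1}g^+(u)\,du\big)=O(g^+(\d))$. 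Letting $K\to\infty$ and then $\d\to0$ makes the drift vanish by Assumption \Ref{ADB-assns}(iii), so that $N\uK(t_1(K)+s)$ inherits the martingale limit $W_n$. This is also where the auxiliary level $\d$ disappears from the answer, since $H_0(\d)\to0$; that cancellation is the analytic heart of the matching and is precisely what \Ref{ADB-assns}(iii) guarantees.

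The step I expect to be the main obstacle is making this drift estimate rigorous and uniform in $(K,\d,s)$: establishing the approximate-martingale property of the jump process $N\uK$ with quantitative control of its predictable quadratic variation, and showing that the $O(g^+(\d))$ bound on the small-density contribution really does dominate the cumulative nonlinear perturbation over a time interval of length of order $\log K$ --- the point where a crude comparison with constant-rate linear processes fails, and where Assumption \Ref{ADB-assns}(iii) is indispensable. The remaining ingredients are routine by comparison: the $L^2$ convergence of the linear martingale and the strong law for $W_n$, the continuity of $G_0^{-1}$, and the tightness of $\oZ\uK(t_1(K)+\cdot)$ in the uniform topology, which follows from the fluid estimates underlying \Ref{lim} together with the monotone dependence of the flow $G_0^{-1}$ on its argument.
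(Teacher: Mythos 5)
Your splice through $G_0$ --- the identity $e^{G_0(\oZ\uK(t))} \Eq K^{-1}e^{(\l-\m)t}N\uK(t)$ and the reduction of the theorem to convergence of $K^{-\a}N\uK(t_1(K)+\cdot)$ --- is correct, and in spirit your two-regime architecture is the same as the paper's. The genuine gap is in the step you yourself flag as the main obstacle, and it is not a technicality. Computing the generator of $Z\uK$ exactly on $f(z)=z\,e^{H_0(z/K)}$, the first-order term is indeed $(\l-\m)f(z)$, but the remainder is a curvature term: since $f'(z)=e^{H_0(x)}/(1-g(x))$ with $x=z/K$, one finds
$$
   f''(z) \Eq \frac{e^{H_0(x)}\,\bigl(xg(x)\bigr)'}{z\,(1-g(x))^{2}},
$$
so the drift error per unit time is of order $\l\,e^{H_0(x)}\bigl|(xg)'(x)\bigr|\,(1-g(x))^{-2}$: it is governed by $(xg)'$, \emph{not} by $g$. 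Assumption~\Ref{ADB-assns}~(ii) gives only $|(xg)'|\le\th$, and Assumption~\Ref{ADB-assns}~(iii) controls $\int_0 u^{-1}g^+(u)\,du$; neither forces $(xg)'(x)\to0$ as $x\to0$, and near $x_\infty$ the factor $e^{H_0(x)}(1-g(x))^{-2}$ blows up. Consequently your claim that the remainder is ``of order $g(\oZ\uK)$'', and the resulting $O\bigl(\int_0^\d u^{-1}g^+(u)\,du\bigr)$ bound on the cumulative drift, do not follow; with only the paper's hypotheses, the honest bound on the drift accumulated over a window of length $\sim\log K$ is $O(1)$ --- the same size as the limit $W$ you are trying to extract, the dominant contribution coming from the early, low-density phase. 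Moreover, your conversion $du\approx d\oZ\uK/((\l-\m)\oZ\uK)$ is a pathwise occupation-time statement: it is exact for the \emph{pure birth} process (this is precisely the paper's Section~\ref{sec-bp} argument, where monotonicity turns time into a sum of holding times), but with deaths the path is non-monotone and spends random, fluctuation-dominated time at population sizes of order one, so this step is essentially as hard as the theorem itself.

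There is a second, independent gap: you identify the limit law of $K^{-\a}N\uK$ with that of $W$ (atom $\m/\l$ at $0$, exponential tail) ``because while $\oZ\uK$ is small, $N\uK$ coincides to leading order with the martingale of the frozen linear process''. That is an assertion, not an argument: $Z\uK(t)e^{-(\l-\m)t}$ is not a martingale, and the independent-subfamily decomposition you invoke is a property of the linear process, not of $Z\uK$. Turning the analogy into a proof requires an actual comparison, which is exactly what the paper constructs: the sandwich coupling $V\uK_t\le Z\uK_t\le U\uK_t$, $V\uK_t\le Y\uK_t\le U\uK_t$ up to the stopping time \Ref{tau}, with $U\uK,V\uK$ linear BD processes whose rates are perturbed by $\l_K,\m_K\to0$, together with the explicit linear-BD formulas \Ref{ADB-BD}; the same coupling is what justifies your unproved claim that $\oZ\uK\to0$ on $\{W=0\}$. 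Note also how the paper sidesteps your curvature problem entirely: the stochastic-versus-deterministic comparison (proof of \Ref{raz}, It\^o plus Gronwall) is run only over the short window $[t_0,t_1]$ of length $\frac{1-\a-c}{\l-\m}\log K$, with $c$ chosen in \Ref{ADB-c-def} so that the Gronwall factor $K^{2(1+\th)(1-\a-c)}$ is beaten by the $K^{-1}$ variance of the martingale term; only the Lipschitz constant $\th$ is used, never $(xg)'$ pointwise. To rescue your route you would need either additional hypotheses (for instance $(xg)'(x)\to0$ as $x\to0$, plus control near $x_\infty$), or to confine the approximate-martingale argument to an intermediate time $t_0$ and handle $[t_0,t_1]$ by Gronwall and a coupling --- at which point you have rebuilt the paper's proof.
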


\medskip

\begin{rem}\label{ADB-remark}
\
\medskip

{\bf (1)}    The function $G_0$ is well defined, because of~Assumption~\Ref{ADB-assns}~(iii), and
is strictly increasing, having $\lim_{x\to 0+}G_0(x)=-\infty$ and
$\lim_{x\to x_\infty} G_0(x)=\infty$. The latter limit holds, because
$0 \le x_\infty - xg(x) \le \th(x_\infty - x)$ in $0 \le x \le x_\infty$, from Assumption~\Ref{ADB-assns}~(ii),
implying that
$$
0 \Le s - sg(s) \Eq (s - sg(s)) - (x_\infty - x_\infty g(x_\infty)) \Le (\th-1) (x_\infty - s), \quad 0 < s < x_\infty,
$$
and that $sg(s) \ge \half x_\infty$ in $(1-1/2\th)x_\infty \le s \le x_\infty$.
From this it follows that
\eqs
    \int_{(1-1/2\th)x_\infty}^x \frac{g(s)}{s(1-g(s))}\,ds
 &>& \frac1{x_\infty}\int_{(1-1/2\th)x_\infty}^x \frac{sg(s)}{s-sg(s)}\,ds
      \\
 &>& \   \frac12 \int_{(1-1/2\th)x_\infty}^x \frac{ds}{(\th-1)(x_\infty-s)} \\
 &=& \frac1{2(\th-1)} \log\Bigl(\frac{x_\infty/2\th}{x_\infty - x}\Bigr)\
               \xrightarrow[x\to x_\infty]{}\ \infty.
\ens
Hence~$G := G_0$ is a bijection from $(0,x_\infty)$ to $\Real$, with bounded continuous inverse
$G^{-1}\colon\Real\mapsto (0,x_\infty)$.

In particular, $g(x)=x^p$ with $p>0$, satisfies our assumptions, with $G(x)=\frac 1 p \log \frac{x^p}{1-x^p}$, giving
$$
  w_0 \Eq \begin{cases}
      \left(\frac{W^p}{1+W^p}\right)^{\frac 1 p}, &  \alpha = 0; \\
      \left(\frac 1 2\right)^{\frac 1 p}, & \alpha \in (0,1).
  \end{cases}
$$
The stochastic logistic process corresponds to taking $p=1$, and yields the initial condition
$w_0=\frac{W}{1+W}$ in \eqref{ode}.

\medskip
{\bf (2)}
It follows from \Ref{ADB-BD} below that~$W$
has the distribution of the a.s.~limit of the  martingale $e^{-(\lambda-\mu)t}Y_t$, when~$Y$ is the linear BD
process with {\it per capita\/} birth and death rates $\lambda$ and $\mu$, starting with $Y_0 = 1 = K^0$.
If~$Y\uK$ denotes the same process, but with initial condition $Y\uK_0 = \lfloor K^\a \rfloor$
for some $0 < \a < 1$, then the martingale $e^{-(\lambda-\mu)t}(Y\uK_t/Y\uK_0)$ has mean~$1$ and variance of
order~$K^{-\a}$ as $K\to\infty$, explaining why~$W$ is replaced by~$1$ in~\eqref{RIC} when $\a \in (0,1)$.

\medskip
{\bf (3)}
Theorem \ref{thm} implies that the trajectories that survive early extinction reach the magnitude of the
carrying capacity at times of order $\frac 1 {\lambda-\mu}\log K^{1-\alpha}$.  For $\alpha=0$,
since $G^{-1}(-\infty)=0$, it follows
from~\eqref{RIC} that the trajectories that vanish are those corresponding to the set $\{W=0\}$.
This set is exactly the set of extinction  of the linear branching process~$Y$. For $\alpha>0$,
the probability of early extinction vanishes as $K\to\infty$ for both $Z\uK$ and~$Y\uK$.

\medskip
{\bf (4)}
The Lipschitz assumption on the function~$xg(x)$ can be
replaced by assuming that it is increasing, and has finite derivative at~$x_\infty$:
see Remark~\ref{ADB-increasing}.

\end{rem}

\section{A preview: pure Birth Process}\label{sec-bp}

This subsection is a short detour from our main setup, which provides an additional insight into the structure of the limit.
Consider a non-linear pure birth process $Z$ that jumps from an integer $z$ to $z+1$ at rate $\lambda(z)=\lambda z(1-g(z/K))$, $z=1,2,\ldots, [K x_\infty]$,
where $\lambda>0$ is a constant and $g$ is a function satisfying the assumption of Theorem \ref{thm}. Let $Z_0=1$ and
define  $\lambda(z)=0$ for $z> [Kx_\infty]$, so that $Z$ is absorbed, once it exceeds the level $[Kx_\infty]$.
The holding time in state $z$ equals $\tau_z/\lambda(z)$, where $\tau_z\sim \mathrm{Exp}(1)$ and $\tau_z$'s are i.i.d.
Consider also a linear pure birth  process $Y$ with $Y_0=1$ and birth rates $\lambda z$, $z\in \mathbb{Z}_+$. It is well known that $e^{-\lambda t}Y_t$ is an
$L^2$  bounded martingale which has an almost sure limit $W$\nocite{Fimabook}, and that $W$ has $\mathrm{Exp}(1)$ distribution.
\begin{prop}
Let $G$ be defined as in Theorem \ref{thm},  and   $Z_0=1$ then
\begin{equation}\label{limBP}
\frac{1}{K}Z_{\frac{1}{\lambda}\log K}\xrightarrow[K\to\infty]{d} G^{-1}(\log W).
\end{equation}
\end{prop}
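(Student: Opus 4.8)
The plan is to exploit the explicit representation of the non-linear process through its first-passage times, coupling it to the linear (Yule) process by driving both with the \emph{same} i.i.d.\ exponentials $\tau_z$. Write $T_n \Def \sum_{z=1}^{n-1}\tau_z/\l(z)$ for the first time $Z$ reaches level $n$, and $\tilde T_n \Def \frac1\l\sum_{z=1}^{n-1}\tau_z/z$ for the corresponding passage time of the linear process $Y$. Since $Z$ is a pure birth process, $\{Z_t \ge n\} = \{T_n \le t\}$, so, fixing $x \in (0,x_\infty)$ and setting $n=\lceil Kx\rceil$, $t_K=\frac1\l\log K$, the law of $K^{-1}Z_{t_K}$ is completely determined by that of $T_{\lceil Kx\rceil}$. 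As $x<x_\infty$, we have $\lceil Kx\rceil\le[Kx_\infty]$ for $K$ large, so absorption plays no role in reaching level $\lceil Kx\rceil$.

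Next I would split the passage time via $\frac1{1-g}=1+\frac{g}{1-g}$, giving $T_n = \tilde T_n + S_K$ with $S_K \Def \frac1\l\sum_{z=1}^{n-1}\frac{\tau_z}{z}\,\frac{g(z/K)}{1-g(z/K)}$. For the linear part I invoke the martingale convergence $e^{-\l t}Y_t\to W$: evaluated along the increasing jump times $\tilde T_n\to\infty$ this gives $n\,e^{-\l\tilde T_n}\to W$ a.s., whence $\l\tilde T_{\lceil Kx\rceil}-\log K \to \log x - \log W$ almost surely (recall $W>0$ a.s.\ for the pure birth process). For the correction $S_K$ I would prove convergence in probability to the deterministic constant $\frac1\l H_0(x)$. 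Writing $\frac1z\frac{g(z/K)}{1-g(z/K)}=\frac1K f(z/K)$ with $f(u)=\frac{g(u)}{u(1-g(u))}$ exhibits $\E[S_K]=\frac1{\l K}\sum_{z=1}^{n-1}f(z/K)$ as a Riemann sum converging to $\frac1\l\int_0^x f(u)\,du=\frac1\l H_0(x)$, the integral being finite by Assumption~\Ref{ADB-assns}~(iii). For the variance, the bound $g(z/K)\le g(x)<1$ for $z\le Kx$ makes each summand of $\mathrm{Var}(S_K)=\frac1{\l^2}\sum_z z^{-2}\big(\frac{g(z/K)}{1-g(z/K)}\big)^2$ dominated by the summable $\big(\frac{g(x)}{1-g(x)}\big)^2 z^{-2}$, while each term tends to $0$ since $g(z/K)\to g(0)=0$ for fixed $z$; dominated convergence for series then yields $\mathrm{Var}(S_K)\to 0$.

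Combining the pieces gives $\l\big(T_{\lceil Kx\rceil}-t_K\big)\to \log x + H_0(x)-\log W = G(x)-\log W$ in probability, using $G=G_0$ from~\Ref{ADB-Gdef}. Translating back, for any continuity point $x$,
\[
   \P[K^{-1}Z_{t_K}\ge x]=\P[T_{\lceil Kx\rceil}\le t_K]=\P\big[\l(T_{\lceil Kx\rceil}-t_K)\le 0\big]\to\P[G(x)\le\log W]=\P[G^{-1}(\log W)\ge x],
\]
the final step using that $G$ is a strictly increasing bijection with continuous inverse (Remark~\ref{ADB-remark}~(1)), and that $0$ is a continuity point of the limit because $W$ has a density. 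This is exactly \eqref{limBP}.

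The main obstacle is the analysis of $S_K$: one must show simultaneously that its random fluctuations vanish, so that no randomness beyond $W$ survives in the limit, and that its mean converges to $H_0(x)$ despite the possible singularity of $f$ at the origin. Both hinge on Assumption~\Ref{ADB-assns}~(iii): integrability of $u^{-1}g^+(u)$ near $0$ controls the mean, while the uniform bound $g(z/K)\le g(x)<1$ together with dominated convergence controls the variance. The remaining steps --- the a.s.\ behaviour of $\tilde T_n$ through the martingale limit, and the passage-time/distribution-function translation --- are routine once the coupling through the shared $\tau_z$ is in place.
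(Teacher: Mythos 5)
Your proposal is correct and follows essentially the same route as the paper's own proof: the same coupling of $Z$ and $Y$ through shared exponential holding times, the same decomposition of the passage time $T_n=\widetilde T_n+S_K$, the same use of the martingale limit $e^{-\lambda t}Y_t\to W$ along the jump times, an $L^2$ (mean plus variance) argument for the correction term, and the same translation from passage times to the distribution function of $K^{-1}Z_{\frac1\lambda\log K}$ via the continuity of the law of $W$. The only deviations are in the variance step, where you use dominated convergence for series against the summable envelope $C_x^2 z^{-2}$ (arguably cleaner than the paper's splitting of the sum at $\delta K$), except that your envelope constant must use $\sup_{0\le u\le x}g(u)$ in place of $g(x)$, since $g$ is not assumed monotone under Assumptions~\Ref{ADB-assns} --- a trivial fix, because this supremum is strictly less than $1$ by continuity of $g$ on $[0,x]$ and Assumption~\Ref{ADB-assns}~(i).
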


\begin{proof}
Let $Y$ and $Z$ be defined as above, using the same sequence of random variables $(\tau_i)$.
Due to monotonicity of a pure birth process for $t\ge 0$
\begin{align*}
&
\big\{Z_t> n\big\}=\left\{\sum_{i=1}^{n}\frac{1}{\lambda(i)}\tau_i < t\right\}=\big\{T_n < t\big\}, \quad n \le  [x_\infty K] \\
&
\big\{Y_t > n\big\}=\left\{\sum_{i=1}^{n}\frac{1}{\lambda \,i}\tau_i < t\right\}=\big\{\widetilde T_n <  t\big\},\quad n\in \mathbb{N},
\end{align*}
where $T_n$ and $\widetilde T_n$ are the times of the $n$-th jump of $Z$ and $Y$ respectively:
\begin{equation}\label{sums}
T_n=\sum_{i=1}^{n}\frac{1}{\lambda(i)}\tau_i\quad \text{ and}\quad  \widetilde T_n=\sum_{i=1}^{n}\frac{1}{\lambda \, i}\tau_i.
\end{equation}
Note that the coefficients in the first sum $T_n$ in \eqref{sums} depend on $K$, whereas in the second sum $\widetilde T_n$ they do not. Therefore we establish convergence of the second sum first, and then show that their difference converges to a constant.
Since $Y_{\widetilde T_n}=n$, $\lim_{n\to\infty}\widetilde T_n=\infty$ and $\lim_{t\to\infty} e^{-\lambda t}Y_t =W$
\begin{equation}\label{limTilde}
\widetilde T_n -\frac 1 \lambda \log n = -\frac 1 \lambda \log \left(e^{-\lambda\widetilde T_n} Y_{\widetilde T_n}\right)\xrightarrow[n\to\infty]{a.s.}-\frac 1 \lambda\log W.
\end{equation}
Let us show that for any $x\in (0,x_\infty)$
\begin{equation}\label{Delta}
T_{[xK]} -\widetilde T_{[xK]} \xrightarrow[K\to\infty]{L^2}  \frac 1 \lambda \int_0^x \frac{1}{s}\frac {g(s)}{1-g(s)}ds.
\end{equation}
Indeed, denoting by  $h(s)= \frac {g(s)}{s(1-g(s))}$, we have
\begin{align*}
\lambda\,\E \left(T_{[xK]} -\widetilde T_{[xK]}\right)   =
&  \sum_{i=1}^{[xK]} \frac 1{i} \frac{g(i/K)}{1-g(i/K)}\E  \tau_i
=\sum_{i=1}^{[xK]} \frac 1{i/K} \frac{g(i/K)}{1-g(i/K)} \frac 1 K\\
& =\sum_{i=1}^{[xK]} \frac 1 K h(i/K)
\xrightarrow[K\to\infty]{} \int_0^x h(s)ds,
\end{align*}
where we used  Assumption~\Ref{ADB-assns}~(iii). Similarly,
\begin{equation}
\label{var}
\begin{aligned}
\lambda^2\var \left(T_{[xK]} -\widetilde T_{[xK]}\right)
&
=\sum_{i=1}^{[xK]} \left(\frac 1{i} \frac{g(i/K)}{1-g(i/K)}\right)^2
=\frac{1}{K}\sum_{i=1}^{[xK]}\frac{1}{K} h^2(i/K)\xrightarrow[K\to\infty]{}0.
 \end{aligned}
\end{equation}
This can be seen as follows. Let $\varepsilon>0$ be given. Choose $\delta>0$ such that $0\le g(x)\le \varepsilon$ for all $x\in[0,\delta]$. This is possible because $g$ is continuous and $g(0)=0$. It is clear that $\frac{1}{K}\sum_{i=[\delta K]}^{[xK]}\frac{1}{K} h^2(i/K)\xrightarrow[K\to\infty]{}0$, because the function $h^2(s)=(g(s)/s)^2$ is bounded and integrable on $[\delta,x]$.
The residual sum satisfies
$$
\frac{1}{K}\sum_{i=1}^{[\delta K]}\frac{1}{K} h^2(i/K) \le \frac{1}{K}\Big(\max_{1\le i\le [\delta K]}h(i/K)\Big)\sum_{i=1}^{[\delta K]}\frac{1}{K}  h(i/K).
$$
By Assumption~\Ref{ADB-assns}~(iii), the sum in the right hand side converges to  $\int_0^\delta h(s)ds <\infty$
and
$$
\frac{1}{K}\Big(\max_{1\le i\le [\delta K]}h(i/K)\Big)\le C \frac{1}{K} \max_{1\le i\le [\delta K]}\frac{g (i/K)}{i/K}\le C\max_{1\le i\le [\delta K]}g (i/K)\le C\varepsilon,
$$
with a constant $C$ independent of $K$. Thus the convergence in \eqref{var} holds by arbitrariness of $\varepsilon$ and the limit \eqref{Delta}
follows.

Now \eqref{limTilde} and  \eqref{Delta} imply
$$
T_{[xK]}-\frac{1}{\lambda }\log K \xrightarrow[K\to\infty]{\P}
\frac 1 \lambda \left(\int_0^x \frac{1}{s}\frac {g(s)}{1-g(s)}ds +   \log x- \log W  \right) = \frac 1\lambda\Big(G(x)- \log W\Big).
$$
Since $W$ has a continuous distribution,
\begin{align*}
\P\left(\frac{1}{K}Z_{\frac{1}{\lambda}\log K}>x\right) =\;
&
\P\left( Z_{\frac{1}{\lambda}\log K}> [xK]\right) = \P\left(T_{[xK]} < \frac{1}{\lambda}\log K\right)
\xrightarrow[K\to\infty]{}\\
&
 \P\Big(G(x)- \log W <0\Big) =\P\Big(G^{-1}(\log W)>x\Big), \quad \forall x\in (0,x_\infty)
\end{align*}
which proves  \eqref{limBP}.
\end{proof}

\section{Proof of Theorem \ref{thm}}

The main idea of the proof is to construct the process  $Z\uK$, together with an auxiliary linear BD
process~$Y\uK$, on the same probability space, in such a way that~$Z\uK$ is well approximated by~$Y\uK$ on the interval
$[0,t_0(K)]$, where $t_0(K) := \frac 1 {\lambda-\mu}\log K^c$, and $c>0$ is a constant
such that $\a + c$ is less than, but close enough to~$1$: more precisely, such that
\eq\label{ADB-c-def}
   0 < \{1-(\a+c)\}(1+\th) < 1/2,
\en
for~$\th$ as in Assumption~\Ref{ADB-assns}~(ii).

Thereafter, we extrapolate this approximation on $[t_0(K),t_1(K)]$, using the flow generated by the o.d.e.~\eqref{ode}.
Our proof shows that this approximation is enough to establish Theorem~\ref{thm}.
The main effort is in proving that
\begin{equation}\label{teqz}
    \frac 1 K Z_{ t_1}\ \xrightarrow[K\to \infty]{d}\ w_0,
\end{equation}
where~$w_0$ is as in~\eqref{RIC}.
Once this is done, the rest is immediate from Kurtz~\cite{Kurtz}, Theorem~2.11.

\medskip

To this end,
for each~$K$, we  construct a process $(Y\uK,Z\uK,U\uK,V\uK)$ with the following properties:

\medskip

\begin{enumerate}
\addtolength{\itemsep}{0.7\baselineskip}
\renewcommand{\theenumi}{\alph{enumi}}

\item  $Y\uK_0=Z\uK_0=U\uK_0=V\uK_0= \lfloor K^\alpha \rfloor$;

\item  $Y\uK$ is the linear BD process  with {\it per capita\/}  birth rate $\lambda$ and death rate $\mu$;

\item  $Z\uK$ is the non-linear BD process with {\it per capita\/} birth rate $\lambda - (\lambda-\mu) g_1(z/K)$
     and  death rate $\mu + (\lambda-\mu) g_2(z/K)$, $z\in \mathbb{Z}_+$;

\item  $U\uK$ is the linear BD process  with {\it per capita\/} birth rate $\{\lambda + \l_K\}$
  and death rate $\{\mu  -  \m_K\}$, where
\[
     \l_K \Def (\lambda-\mu) g_1^+\big(K^{\a+c+\h-1}\big),\quad \m_K \Def (\lambda-\mu) g_2^+\big(K^{\a+c+\h-1}\big),
\]
and  where~$\h$ is a constant satisfying $0 < \h < 1-\a-c$;

\item  $V\uK$ is the linear BD process  with {\it per capita\/} birth rate $\{\lambda - \l_K\}$
  and death rate $\{\mu  +  \m_K\}$;

\item  $V\uK_t \le Y\uK_t \le U\uK_t$ for all $t\ge 0$;

\item  $V\uK_t\le Z\uK_t \le U\uK_t$ for $t\le \tau\uK$, where $\tau\uK$ is the first time at which~$Z\uK$
hits the level  $K^{\a + c+ \h}$:
\begin{equation}\label{tau}
  \tau\uK \Def \inf\left\{t\ge 0: Z\uK_t \ge K^{\a + c+ \h}\right\}.
\end{equation}

\end{enumerate}

\noindent
The coupling is described in Section~\ref{coupling}.

\medskip

Suppressing the dependence on~$K$ where possible,
define $\overbar{Y}_t := \frac 1 K Y_t$ and another auxiliary process
\begin{equation}\label{tildeZ}
   \widetilde Z_t \Def
              \begin{cases}
                     \overbar{Y}_t, & t\le t_0; \\
                     \phi_{t_0,t}(\overbar{Y}_{t_0}), & t > t_0,
              \end{cases}
\end{equation}
where $\phi_{s,t}(x)$ is the flow generated by the o.d.e.~\eqref{ode};  that is, using~\eqref{ADB-ode-solution},
\eq\label{ADB-detflow2}
   G(\phi_{s,t}(x)) - G(x) \Eq (\lambda-\mu)(t-s),
\en
if $x > 0$, and $\phi_{s,t}(0) = 0$ for all $t > s$.

It thus follows from \eqref{tildeZ}, with our choices of $t_0$ and $t_1$, that,
on the set $\{\overbar{Y}_{t_0} >0\}$,
$$ 
   G(\widetilde Z_{t_1}) \Eq G(\overbar{Y}_{t_0})   +  (1-\alpha-c)\log K;
$$
the equation is also trivially true when $\{\overbar{Y}_{t_0} = 0\}$, since then
$\widetilde Z_{t_1} = \overbar Y_{t_0} = 0$, and $G(0) = -\infty$.
Thus, introducing $W_K := K^{1-\alpha-c} \chi_\a(K)\overbar{Y}_{t_0} = e^{-(\l-\m)t_0}(Y_{t_0}/Y_0)$,
with $\chi_\a(K):=K^\a/\lfloor K^\a \rfloor$, we obtain
\begin{equation}\label{ADB-GW1}
               G(\widetilde Z_{t_1})  \Eq  \log  W_K - \log \chi_\a(K) + H\bigl(K^{\alpha+c-1}W_K/\chi_\a(K)\bigr).
\end{equation}

It follows from Remark~\ref{ADB-remark}\,(2) that
\begin{equation}\label{Kc}
  W_K \Eq e^{-(\l-\m)t_0(K)}(Y\uK_{t_0(K)}/Y\uK_0)\ \xrightarrow[K\to\infty]{d}\
     \begin{cases}
           W, & \text{if } \alpha =0; \\
           1 & \text{if } \alpha \in (0,1).
     \end{cases}
\end{equation}
Hence, since~$H$ is continuous and $H(0)=0$, since $\alpha+c<1$ and since $\lim_{K\to\infty}\chi_\a(K) = 1$,
the last term in~\eqref{ADB-GW1}
converges in distribution to zero as $K\to \infty$. Thus, again using~\eqref{Kc} in~\eqref{ADB-GW1},
and because the function~$G^{-1}$ is continuous, it follows that
$$ 
     \widetilde Z\uK_{t_1(K)}\xrightarrow[K\to\infty]{d} G^{-1}\big(\one{\alpha=0}\log W\big).
$$

It remains to show that $\widetilde Z\uK$ is an appropriate approximation for $\overbar{Z}\uK$ at time $t_1(K)$;
we use the coupling to show that
$$ 
    \overbar{Z}\uK_{t_1(K)} - \widetilde Z\uK_{t_1(K)}\ \xrightarrow[K\to\infty]{d}\ 0.
$$
Since
$$
   \bigl|\overbar{Z}_{t_1}-\widetilde Z_{t_1}\bigr| \Eq
                   \left|\oZ_{t_1} -\phi_{t_0,t_1}(\overbar{Y}_{t_0})\right|
     \Le \left|\oZ_{t_1} - \phi_{t_0,t_1}(\overbar{Z}_{t_0})\right| +
                    \left|\phi_{t_0,t_1}(\overbar{Z}_{t_0}) - \phi_{t_0,t_1}(\overbar{Y}_{t_0})\right|,
$$
it is enough to show that
\begin{equation}\label{raz}
            \oZ\uK_{t_1(K)} - \phi_{t_0,t_1}(\overbar{Z}\uK_{t_0})\ \xrightarrow[K\to\infty]{d}\ 0,
\end{equation}
and, using the coupling of $Y\uK,Z\uK,U\uK$ and~$V\uK$ constructed in Section~\ref{coupling}, that
\begin{equation}\label{dva}
     \phi_{t_0,t_1}(\overbar{Z}\uK_{t_0}) - \phi_{t_0,t_1}(\overbar{Y}\uK_{t_0})\ \xrightarrow[K\to\infty]{d}\ 0.
\end{equation}
These two relations are proved in Sections \ref{proofraz} and~\ref{proofdva}.  The proof of~\eqref{teqz}
is then complete.

Before proving \eqref{raz} and~\eqref{dva}, we collect some useful facts.
First, for $0 < x < 1$, we have
\[
    \half g^+(x) \log(1/x) \Le \int_{x}^{\sqrt x} \frac{g^+(s)}s\,ds \Le \int_0^{\sqrt x} \frac{g^+(s)}s\,ds,
\]
so that, from Assumption~\Ref{ADB-assns}~(iii), $\lim_{x\to 0} g^+(x) \log(1/x) = 0$.
This, in particular, implies that $\lim_{K\to\infty} g_l^+(K^{-\g})\log K = 0$ for any $\g > 0$
and $l \in \{1,2\}$,
and hence that
\eq\label{ADB-K-limits}
  \lim_{K\to\infty} (\l_K + \m_K)\log K \Eq 0;
\en
it also follows that~$g$ is continuous at~$0$.
Then, in view of Assumptions~\Ref{ADB-assns}~(i) and~(ii), we have
\eq\label{ADB-f-def}
    xg(x) \ \ge\ \max\{-\th x, x_\infty + \th(x-x_\infty)\} \ =:\ f_g(x), \quad 0 \le x \le x_\infty.
\en
Note that~$f_g$ is convex, and that $\tg(x) := x^{-1}f_g(x)$ is increasing, since $\th \ge 1$.

Next, let $Z\uab := (Z_t\uab,\,t\ge0)$ denote the linear birth and death process with
{\it per capita\/} birth rate~$\g$ and death rate~$\b$, and with $Z_0\uab = 1$; suppose
that $\g > \b$.  Then, writing $\h_t := (\b/\g)e^{-(\g-\b)t}$, we have
\eqa
   \frac\b\g - \P[Z_t\uab = 0] &=& \frac{\h_t}{1 - \h_t}\Bl 1 - \frac\b\g \Br;\non\\
    \P[Z_t\uab > r] &=& \frac{1-\b/\g}{1-\h_t}\Blb 1 - \Bl \frac\g\b - 1\Br\,\frac{\h_t}{1 - \h_t} \Brb^r,
          \quad r \ge 1; \label{ADB-BD}
\ena
furthermore, $\E Z_t\uab = e^{(\g-\b)t}$ and $\var Z_t\uab \le \frac{\g+\b}{\g-\b}e^{2(\g-\b)t}$
 (see p.\,159 in \cite{GS01}).
The process $(Z_t\uabM,\,t\ge0)$ with the same birth and death rates, but starting with $Z_0\uabM = M$,
is distributed as the sum of~$M$ independent copies of~$Z\uab$; hence, by Chebyshev's inequality,
\eq\label{ADB-BD2}
   \P[|M^{-1}e^{-(\g-\b)t} Z_t\uabM - 1| \ge \e] \Le \frac{\g+\b}{M\e^2(\g-\b)}.
\en

Now recall the well known semimartingale decomposition of $Z$:
\begin{equation}\label{Zsem}
          Z_t \Eq Z_0 + (\lambda -\mu)\int_0^t \Big(Z_s -   Z_s g(Z_s/K)\Big)ds  + M_t, \quad t\ge 0,
\end{equation}
(see, for example, Klebaner~\cite{Fimabook} p.360), where $M=(M_t)_{t\ge 0}$ is a martingale with $M_0=0$ a.s.\ and
$$
         \langle M\rangle_t  \Eq \int_0^t \Big((\lambda +\mu)Z_s
                   +  (\lambda -\mu) Z_s  (g_2(Z_s/K) - g_1(Z_s/K)) \Big)\,ds.
$$
Dividing both sides of~\eqref{Zsem} by~$K$, we see that the density  process $\oZ_t= K^{-1} Z_t$ satisfies the equation
\begin{equation}\label{sde}
 \overbar{Z}_t \Eq \overbar{Z}_0 + (\lambda -\mu)\int_0^t \Big(\overbar{Z}_s -\overbar{Z}_s g(\overbar{Z}_s)\Big)\,ds  +
             \frac 1 {\sqrt{K}} \hM_t, \quad t\ge 0,
\end{equation}
where the martingale $\hM$ has zero mean and predictable quadratic variation
\eqa\label{barM}
    \langle\hM\rangle_t
    &=&  \int_0^t \Big((\lambda +\mu)\oZ_s +  (\lambda -\mu)\oZ_s (g_2(\oZ_s) - g_1(\oZ_s)\Big)\,ds  \non\\
     &=&  \int_0^t \Big((\lambda +\mu)\overbar{Z}_s +  (\lambda -\mu)\overbar{Z}_sg(\overbar{Z}_s)\Big)\,ds.
\ena
Taking expectations in~\eqref{sde}, and recalling~\Ref{ADB-f-def},
we see that
\begin{align*}
 \E\overbar{Z}_t & =\,  \E\overbar{Z}_0 + (\lambda -\mu)\int_0^t \Big(\E\overbar{Z}_s
            -\E\{\overbar{Z}_s g(\overbar{Z}_s)\}\Big)\,ds   \\
   & \le\, \E\overbar{Z}_0 + (\lambda -\mu)\int_0^t \Big(\E\overbar{Z}_s
            -\E f_g(\overbar{Z}_s)\Big)\,ds   \\
  &\le\,  \E\overbar{Z}_0 + (\lambda -\mu)\int_0^t \Big(\E\overbar{Z}_s
            -f_g(\E \overbar{Z}_s)\Big)\,ds ,
\end{align*}
this last because~$f_g$ is convex.
Hence $\E\overbar{Z}_t$ satisfies the integral inequality
$$
     \E\overbar{Z}_t \Le \E\oZ_0 + (\l-\m)\int_0^t \Bigl(\E\overbar{Z}_s -\E\overbar{Z}_s \tg(\E\overbar{Z}_s)\Big)ds,
$$
so that $\E\overbar{Z}_t \le \tphi_{0,t}(\E\oZ_0)$, where $\tphi_{0,t}(x)$ is the flow
generated by replacing $g$ by~$\tg$ in the o.d.e.~\Ref{ode}.
Thus,
in particular, since $\tg(x_\infty)=1$, $0\le \E\overbar{Z}_t \le  x_\infty$ for
all $t\ge 0$.  This in turn implies, using~\eqref{sde}, that
\eq\label{ADB-ZgZ-mean}
      (\lambda -\mu)\int_{t_0}^t \E\{\overbar{Z}_s g(\overbar{Z}_s)\}\,ds \Eq
          \E\overbar{Z}_{t_0}+ (\lambda -\mu)\int_{t_0}^t \E\overbar{Z}_s\,ds - \E\overbar{Z}_t
           \Le x_\infty\{1 + (\lambda -\mu)(t-t_0)\} .
\en

\subsection{Proof of \eqref{raz}}\label{proofraz}
Write $\delta_t := \overbar{Z}_{t}-\widehat Z_t$, where $\widehat Z_t := \phi_{t_0,t}(\overbar{Z}_{t_0})$  satisfies the equation
$$
   \widehat Z_t \Eq
       \overbar{Z}_{t_0} + (\lambda -\mu)\int_{t_0}^{t} \Big(\widehat{Z}_s -\widehat{Z}_s g(\widehat{Z}_s)\Big)ds,
            \quad t\ge t_0,
$$
so that,  using \eqref{sde},
$$
  \delta_t \Eq  (\lambda -\mu)\int_{t_0}^{t} \left( \delta_s
    + \widehat{Z}_s g( \widehat{Z}_s) -\overbar{Z}_s g(\overbar{Z}_s)  \right) ds
      + \frac 1 {\sqrt{K}} \big(\hM_{t} - \hM_{t_0}\big).
$$
Applying It\^o's formula to $\d_t^2$ as a function of~$\d_t$, (see, e.g.,   eq. (8.58) \cite{Fimabook})  we obtain
\eq\label{ADB-Ito}
 \delta_t^2 \Eq
   2(\lambda -\mu)\int_{t_0}^{t} \left(   \delta^2_s
      - \big(\overbar{Z}_s g(\oZ_s) -\widehat{Z}_s g( \widehat{Z}_s)\big)(\oZ_s -\widehat{Z}_s)  \right) ds
          + \frac 1 K \sum_{t_0\le s\le t}  \left(\Delta \hM_s\right)^2.
\en

Taking expectations of both sides, and using Assumption~\Ref{ADB-assns} (ii),
we obtain the inequality
\begin{equation}\label{dineq}
  \E \delta^2_t \Le 2(\lambda -\mu)(1+\th)\int_{t_0}^{t} \E \delta^2_s\, ds
     + \frac 1 K \int_{t_0}^t \E\Big((\lambda +\mu) \overbar{Z}_s +  (\lambda -\mu) \overbar{Z}_sg(\overbar{Z}_s)\Big)ds,
           \quad t\ge t_0;
\end{equation}
for the last integral,  we have used
the formulae $\sum_{s\le t} (\Delta \hM_s)^2=\bigl[\hM\bigr]_t $ and
$\E \bigl[\hM \bigr]_t=\E \bigl\langle \hM \bigr\rangle_t$, together with~\eqref{barM}.
Using $\E \oZ_s \le x_\infty$ and~\eqref{ADB-ZgZ-mean} in~\eqref{dineq}, we obtain
$$ 
      \E \delta_t^2 \Le 2(\lambda -\mu)(1+\th)\int_{t_0}^{t} \E \delta^2_s\, ds
              + \frac 1 K x_\infty\{1 + 2\l(t-t_0)\}.
$$
The Gr\"{o}nwall inequality now yields
$$
        \E \delta_{t_1}^2 \Le \frac 1 K x_\infty\{1 + 2\l(t_1-t_0)\} e^{2(\lambda -\mu)(1+\th)(t_1-t_0)}.
$$
Since $(\lambda-\mu)(t_1-t_0)= \log K^{1-\alpha-c}$ and by the choice~\Ref{ADB-c-def} of~$c$,
it follows that
$$
      \E \delta_{t_1}^2 \Le  x_\infty\left\{1 + \frac{2\l}{\l-\m}\log K\right\} K^{2(1+\th)(1-\alpha-c)-1}
           \ \xrightarrow[K\to\infty]{}\ 0,
$$
and \eqref{raz} is proved.

\subsection{Proof of \eqref{dva}}\label{proofdva}
In this section, we use the coupling of $(Y,Z,U,V)$ established in Section~\ref{coupling}.

First, we show that $\lim_{K\to\infty}\P[\t\uK \le t_0(K)] = 0$, where $\t\uK$ is as in~\eqref{tau}.
Because, from property~(g), $Z_t \le U_t$ for all $0 \le t\le t\uK$, we have
\begin{align*}
 \P[\t\uK \le t_0(K)] &\Le
           \P\Bigl(\sup_{0 \le t\le t_0} U_t \ge K^{\a + c + \h}\Bigr)  \\
  & \Le \P\Bigl(\sup_{0 \le t\le t_0} e^{-\g_K t}U_t \ge e^{-\g_K t_0} K^{\a + c + \h}\Bigr), 
\end{align*}
where $\g_K := \l - \m + \l_K + \m_K$ is the exponential growth rate of the birth and death process~$U$.
Applying Doob's inequality to the martingale $e^{-\g_K t}U_t$ thus shows that
\[
    \P[\t\uK \le t_0(K)] \Le K^\a \,K^{-(\a + c + \h)}e^{\g_K t_0} \ \sim\ K^{-\h} \ \to 0,
\]
as $K \to \infty$, because $(\l_K + \m_K)\log K \to 0$ from~\Ref{ADB-K-limits}.
In view of~\Ref{ADB-detflow2} and of properties (f) and~(g) of the coupling, it is thus
enough for~\eqref{dva} to show that
\eq\label{ADB-flows}
     \phi_{t_0,t_1}(\oU\uK_{t_0}) - \phi_{t_0,t_1}(\oV\uK_{t_0})\ \xrightarrow[K\to\infty]{d}\ 0,
\en
where $\oU\uK_t := K^{-1}U\uK_t$ and $\oV\uK_t := K^{-1}V\uK_t$.

If $\a = 0$, by \eqref{ADB-ode-solution} and \eqref{ADB-Gdef}, and on the set $\{U_{t_0}>0\}$,
we have
$$
G\big(\phi_{t_0,t_1}(\oU_{t_0})\big) \Eq \log \oU_{t_0} + H(\oU_{t_0})   + (\l - \m)(t_1-t_0)
  \Eq \log (K^{ -c} U_{t_0}) + H(\oU_{t_0}).
$$
Define
$$
\Psi(x) \ :=\
\begin{cases}
  G^{-1}(\log x), & x>0; \\
  0, & x=0.
\end{cases}
$$
This is a continuous function from $[0,\infty)$ to $[0,x_\infty)$, and
\eq\label{PCH-flows}
\phi_{t_0,t_1}(\oU_{t_0}) \Eq \Psi\left(K^{-c}U_{t_0}e^{H(\oU_{t_0})}\right);
  \quad \phi_{t_0,t_1}(\oV_{t_0}) \Eq \Psi\left(K^{-c}V_{t_0}e^{H(\oU_{t_0})}\right),
\en
irrespective of whether $U_{t_0}$ and~$V_{t_0}$ are zero or positive.
Now, from~\Ref{ADB-BD} and~\Ref{ADB-K-limits},
\begin{equation}\label{PCH-U-limits}
\begin{aligned}
&
\lim_{K\to\infty}\P[U\uK_{t_0(K)} = 0] \Eq \frac\m\l  \\
&
\lim_{K\to\infty}\P[K^{-c}U\uK_{t_0(K)} > x] \Eq (1-\m/\l)\exp\{-x(1-\m/\l)\}, \quad x > 0.
\end{aligned}
\end{equation}
The same argument then shows that the limits~\Ref{PCH-U-limits} also hold if $U_{t_0}$
is replaced by~$V_{t_0}$.
Since $K^{-c}U_{t_0} \ge K^{-c}V_{t_0}$ a.s., and they both have the
same limits in distribution, it follows that $K^{-c}(U_{t_0}-V_{t_0}) \stackrel{d}{\to} 0$ as $K\to\infty$.
Note also, from~\Ref{PCH-U-limits}, that $\oU_{t_0} \stackrel{d}{\to} 0$
 as $K\to\infty$, and thus $e^{H(\oU_{t_0})} \stackrel{d}{\to} 1$ also; and that the same relations are
true if $\oU$ is replaced by~$\oV$.
From~\Ref{PCH-flows} and from the continuity of~$\Psi$, the convergence~\Ref{ADB-flows} now follows.
For $0 < \a < 1$, $K^{-c}$ is replaced by $K^{-c-\a}$ in~\Ref{PCH-flows}, and the convergence in distribution of
both $K^{-c-\a}U_{t_0}$ and $K^{-c-\a}V_{t_0}$ to the constant~$1$ follows from~\Ref{ADB-BD2},
in view of~\Ref{ADB-K-limits}.

\begin{rem}\label{ADB-increasing}
Assumption~\Ref{ADB-assns}~(ii) is used to justify \Ref{dineq} on the basis of~\Ref{ADB-Ito}, to
guarantee the uniqueness of the solutions of~\Ref{ode}, and to show that~$G_0$ maps to the whole
of~$\Real$.  However, if $xg(x)$ is non-decreasing in~$x$, it follows from~\Ref{ADB-Ito} that~\Ref{dineq}
holds with~$\th$ replaced by zero.  Furthermore, if $\oZ$ is replaced by any solution of the o.d.e.~\Ref{ode}
other than~$\hZ$, but also starting at~$\oZ_{t_0}$, the difference $\d_t := \oZ_t - \hZ_t$ satisfies~\Ref{ADB-Ito},
with~$M$ the zero function, from which it follows, using Gronwall's inequality, that $\d_t = 0$ for all
$t \ge t_0$, implying uniqueness of the solutions to the o.d.e.
\end{rem}

\subsection{Coupling birth and death processes} \label{coupling}

The proof of \eqref{teqz} will be completed, once we construct the processes $Y\uK$, $Z\uK$, $U\uK$ and $V\uK$, all
on the same probability space, with the properties (a)--(g).
The basic element of our construction is a coupling of two birth and death processes,
one of which has greater birth rate and smaller death rate than the other.
Such a coupling has been suggested, e.g., in~\cite{BBG86}.
For the sake of completeness we give a construction in much the same spirit.
As usual, we suppress the index~$K$ as far as we can.

The coupling is based on a collection of four  sequences of
independent standard Poisson processes~$(\Pi^l_n,\,n\ge1)$, $l \in \{1,2,3,4\}$, together with two double
sequences $(J_n^l(i),\,i\ge0, n\ge1)$, $l\in\{3,4\}$, of independent uniform $U[0,1]$ random variables,
all of which are mutually independent.  We then define processes $(J_{nt}^l,\,t\ge0)$ by
\[
     J_{nt}^3 \Def J_n^3(\Pi_n^3(2\l_K t));\quad J_{nt}^4 \Def J_n^4(\Pi_n^4(2\m_K t)),
\]
where $\l_K$ and~$\m_K$ are as defined in property~(d).  The definitions of the BD processes $U,Y$ and~$V$
are now simple to write down:
\begin{align*}
U_t   =  U_0 + \sum_{n\ge 1} \int_0^t & \one{n\le U_{s-}}\{d\Pi^1_n((\l - \l_K)s) + d\Pi_n^3(2\l_K s)\}  \\
     &
     -\sum_{n\ge 1} \int_0^t \one{n\le U_{s-}}d\Pi^2_n((\mu - \m_K)s);
\\
Y_t = Y_0 + \sum_{n\ge 1} \int_0^t & \one{n\le Y_{s-}}\{d\Pi^1_n((\l - \l_K)s) + \one{J_{ns}^3 \le 1/2}d\Pi_n^3(2\l_K s)\} \\
& - \sum_{n\ge 1} \int_0^t \one{n\le Y_{s-}} \{d\Pi^2_n((\mu - \m_K)s)
                           + \one{J_{ns}^4 \le 1/2}d\Pi_n^4(2\m_K s)\}; \\
V_t = V_0 + \sum_{n\ge 1} \int_0^t & \one{n\le V_{s-}}\,d\Pi^1_n((\l - \l_K)s) \\
&   - \sum_{n\ge 1} \int_0^t \one{n\le V_{s-}}\{d\Pi^2_n((\mu - \m_K)s)  + d\Pi_n^4(2\m_K s)\}
\end{align*}

That these representations yield the distributions of the corresponding BD processes follows because they define
Markov processes having the right jump rates.  These processes only have jumps of~$\pm 1$, so that, for two
of them to cross each other, there have to be times at which they have the same values.  However, if $U_t = Y_t$,
the next transition either leaves their values the same, or increases~$U$ by~$1$, leaving~$Y$ unchanged, or
reduces~$Y$ by~$1$, leaving~$U$ unchanged: so, if $U_0 \ge Y_0$, then $U_t \ge Y_t$ for all $t\ge0$.  The same
considerations yield $Y_t \ge V_t$ for all $t\ge0$, if $Y_0 \ge V_0$, and property~(f) follows,
assuming property~(a).

In order to define the process~$Z$, let
\[
     p^3(t) \Def (\l_K - (\l-\m)g_1(Z_{t-}/K))/(2\l_K);\quad p^4(t) \Def (\l_K + (\l-\m)g_2(Z_{t-}/K))/(2\m_K),
\]
noting that, if $0 \le t \le \t^{(K)} := \inf\{s > 0\colon\, Z_s \ge K^{\a+c+\h}\}$, as defined in property~(g),
then $0\le p^l(t) \le 1$, $l\in\{3,4\}$.  Then the process
\begin{multline*}
 Z_t = Z_0 + \sum_{n\ge 1} \int_0^t \one{n\le Z_{s-}}\{d\Pi^1_n((\l - \l_K)s)
                                           + \one{J_{ns}^3 \le p^3(s)}d\Pi_n^3(2\l_K s)\}\phantom{XXXXXXX} \\
         - \sum_{n\ge 1} \int_0^t \one{n\le Z_{s-}} \{d\Pi^2_n((\mu - \m_K)s) + \one{J_{ns}^4 \le p^4(s)}d\Pi_n^4(2\m_K s)\}
\end{multline*}
is Markovian and has the correct transition rates for $0 \le t \le \t^{(K)}$;  after that time, $Z$ can be
continued in any way that reproduces the correct distribution.  The argument used to show that $U_t \ge Y_t \ge V_t$
for all $t\ge0$ also shows that $U_t \ge Z_t \ge V_t$ for all $0\le t\le \t^{(K)}$, if $U_0 \ge Z_0 \ge V_0$,
and property~(g) follows, assuming property~(a).

\medskip

{\bf Acknowledgements}. ADB and FCK were supported in part by Australian Research Council Grants
Nos DP120102728 and DP150103588.
FCK thanks Haya Kaspi and Tom Kurtz for useful discussions.



\begin{thebibliography}{10}

\bibitem{BHKK15}
A.~D. Barbour, K.~Hamza, Haya Kaspi, and F.~C. Klebaner.
\newblock Escape from the boundary in {M}arkov population processes.
\newblock {\em Adv. in Appl. Probab.}, 47(4):1190--1211, 2015.

\bibitem{b76}
Andrew~D. Barbour.
\newblock Quasi-stationary distributions in {M}arkov population processes.
\newblock {\em Advances in Appl. Probability}, 8(2):296--314, 1976.

\bibitem{Bartlett49}
M.~S. Bartlett.
\newblock Some evolutionary stochastic processes.
\newblock {\em J. Roy. Statist. Soc. Ber. B.}, 11:211--229, 1949.

\bibitem{Bartlett60}
M.~S. Bartlett.
\newblock {\em Stochastic population models in ecology and epidemiology}.
\newblock Methuen's Monographs on Applied Probability and Statistics. Methuen\&
  Co., Ltd., London; John Wiley\& Sons, Inc., New York, 1960.

\bibitem{BH57}
R.~J.~H. Beverton and S.~J. Holt.
\newblock {\em On the Dynamics of Exploited Fish Populations}, volume XIX of
  {\em Fishery investigations Series II}.
\newblock London : Her Majesty's Stationery Office, Ministry of Agriculture,
  Fisheries and Food, 1957.

\bibitem{BBG86}
B.~G. Bhaskaran.
\newblock Almost sure comparison of birth and death processes with application
  to {$M/M/s$} queueing systems.
\newblock {\em Queueing Systems Theory Appl.}, 1(1):103--127, 1986.

\bibitem{GS01}
Geoffrey~R. Grimmett and David~R. Stirzaker.
\newblock {\em Probability and random processes}.
\newblock Oxford University Press, New York, third edition, 2001.

\bibitem{Hass75}
M.~P. Hassell.
\newblock Density-dependence in single-species populations.
\newblock {\em Journal of Animal Ecology}, 44(1):283--295, 1975.

\bibitem{JagKle2}
Peter Jagers and Fima~C. Klebaner.
\newblock Population-size-dependent, age-structured branching processes linger
  around their carrying capacity.
\newblock {\em J. Appl. Probab.}, 48A(New frontiers in applied probability: a
  Festschrift for Soren Asmussen):249--260, 2011.

\bibitem{Kendall56}
David~G. Kendall.
\newblock Deterministic and stochastic epidemics in closed populations.
\newblock In {\em Proceedings of the {T}hird {B}erkeley {S}ymposium on
  {M}athematical {S}tatistics and {P}robability, 1954--1955, vol. {IV}}, pages
  149--165. University of California Press, Berkeley and Los Angeles, 1956.

\bibitem{Fimabook}
Fima~C. Klebaner.
\newblock {\em Introduction to stochastic calculus with applications}.
\newblock Imperial College Press, London, third edition, 2012.

\bibitem{Kurtz}
Thomas~G. Kurtz.
\newblock Solutions of ordinary differential equations as limits of pure jump
  {M}arkov processes.
\newblock {\em J. Appl. Probability}, 7:49--58, 1970.

\bibitem{MSS73}
J.~Maynard-Smith and M.~Slatkin.
\newblock The stability of predator--prey systems.
\newblock {\em Ecology}, 54(2):384--391, 1973.

\bibitem{McK26}
A.~G. McKendrick.
\newblock Applications of mathematics to medical problems.
\newblock {\em Proc. Edinburgh Math. Soc.}, 44:98--130, 1925.

\bibitem{Ri54}
W.~E. Ricker.
\newblock Stock and recruitment.
\newblock {\em J. Fisheries Res. Board Canada}, 11(5):559--623, 1954.

\bibitem{Thor}
Hermann Thorisson.
\newblock {\em Coupling, stationarity, and regeneration}.
\newblock Probability and its Applications (New York). Springer-Verlag, New
  York, 2000.

\bibitem{Whittle55}
P.~Whittle.
\newblock The outcome of a stochastic epidemic---a note on {B}ailey's paper.
\newblock {\em Biometrika}, 42:116--122, 1955.

\end{thebibliography}

\end{document}